\newtheorem{theorem}{Theorem}
\numberwithin{theorem}{section}
\newtheorem{proposition}[theorem]{Proposition}
\newcommand{\gammanikj}[1]{\gamma^{n,#1}_{jl}}
\newcommand{\Deltak}[1]{\Delta_{#1}}
\newcommand{\un}[3]{u^{#1}_{n+e_{#2#3}}}
\newcommand{\vecun}{u_n}
\newcommand{\unone}{u_n^1}
\newcommand{\untwo}{u_n^2}
\DeclareMathOperator*{\argmin}{arg\,min}
\newcommand{\argminmu}{\argmin\limits_{\mu \in \mathbb{R}^2,~\mu \geq 0}}
\newcommand{\fractionsc}{\frac{n}{N}}
\newcommand{\fractiononetwo}{\frac{n+e_{12}}{N}}
\newcommand{\fractiontwoone}{\frac{n+e_{21}}{N}}
\newcommand{\Rr}{{\mathbb{R}}}
\newcommand{\Nn}{{\mathbb{N}}}
\newcommand{\Ii}{\mathcal I}
\newcommand{\Pp}{\mathcal P}
\newcommand{\bi}{{\bold{i}}}
\newcommand{\bk}{{\bold{k}}}
\newcommand{\bn}{{\bold{n}}}
\newcommand{\bom}{{\bold{m}}}
\begin{document}
\title{Socio-economic applications of finite state mean field games}
\author{Diogo Gomes, Roberto M. Velho, Marie-Therese Wolfram}

\date{}
\maketitle


\begin{abstract}
In this paper we present different applications of finite state mean field games to socio-economic sciences. Examples include paradigm shifts in the scientific community or the consumer choice behaviour in the free market. The corresponding finite state mean field game models are hyperbolic systems of partial differential equations, for which we present and validate different numerical methods. We illustrate the behaviour of solutions with various numerical experiments, which show interesting phenomena like shock formation. Hence we conclude with an investigation of the shock structure in the case of two-state problems.
\end{abstract}


\section{Introduction}

Mean field games have become a powerful mathematical tool to model the dynamics of agents in economics, finance and the social sciences. 
Different settings have been considered in the 
literature such as discrete and continuous in time or finite and continuous state space. Originally finite state mean field 
games, see \cite{LCDF, GMS, Gueant2, Gueant1,  GMS2, FG13}, were studied  as an attempt to understand the more general continuous state problems introduced by Lasry \& Lions in \cite{ll1,ll2, ll3} as well as by Huang et al. in \cite{Caines1, Caines2}.
For additional information see also the recent surveys such as \cite{llg2, cardaliaguet, achdou2013finite, GS, bens}. \\

\noindent In this paper we apply finite state mean field games to two
classical problems in socio-economic sciences: consumer choice behaviour and paradigm shifts in a scientific community, see \cite{BesancenotDogguy}.
These mean field game models give rise to systems of hyperbolic partial differential equations, for which 
we develop a new numerical method.
The mathematical modelling is based on the following situation. Let us consider a system of $N+1$ identical players or agents, which can switch between $d \in \mathbb{N}$ different states.  
Each player is in a state $i\in\Ii = \{1, \hdots, d\}$ and can choose a switching strategy to any other state $j\in\Ii$.
The only information available to each player, in addition to its own state, is the number $n_j$ of players he/she sees in the different states $j\in \Ii$.
The fraction of players in each state $i\in \Ii$ is denoted by $\theta_i=\frac{n_i}{N}$ and
we define the probability vector 
 $\theta = (\theta_1, \dots, \theta_d)$, $\theta\in \Pp(\Ii)=\{\theta\in \Rr^d\, :\, \sum_{i} \theta_i=1,\,
 \theta_i\geq 0\}$, which encodes the statistical information about the ensemble of players. 
Each player faces an optimisation problem over all possible switching strategies. Here the key question is
the existence of a Nash equilibrium and to determine the limit as the number of players tends to infinity.  
Gomes et al. showed in \cite{GMS2} that the $N+1$ player Nash equilibrium always exists.\\
\noindent Next we consider the limit $N\to\infty$. Gomes et al. proved in \cite{GMS2} that at least for short time  
the value $U^i=U^i(\theta,t)$ for a player in state $i$, when the distribution of players among the different states
is given by $\theta$, satisfies the hyperbolic system
\begin{equation}
\label{hsys}
\displaystyle - U_t^i(\theta,t) = \sum_{j\in \Ii} g_j(U, \theta) \ \partial_{\theta_j} U^i(\theta,t) + h(U,\theta,i), 
\qquad U(\theta, T)=\psi(\theta).  
\end{equation}
Here $U^i:\Pp(\Ii)\times [0,T]\to \Rr$, 
$g:\Rr^d\times\Pp(\Ii)\to \Rr^d$,  
$h:\Rr^d\times\Pp(\Ii) \times \Ii \to \Rr$, and $\psi:\Pp(\Ii)\to \Rr^d$, 
and $\partial_{\theta_j}$ denotes the partial derivative with respect to the variable $\theta_j$. \\

\noindent In general first order hyperbolic equations do not admit smooth solutions and one needs to consider an appropriate notion 
of solution. Adequate definitions of solutions are well known for conservation laws and equations which admit a maximum principle, e.g. Hamilton-Jacobi equations. 
Up to now the appropriate notion of solutions for \eqref{hsys}, which encodes the mean field limit, is not clear. 
In this paper we present a numerical method, which is based on the Nash equilibrium equations for $N+1$ agents. Therefore these equations will, 
 in case of convergence, automatically yield the appropriate limit. In particular, thanks to the results in \cite{GMS2}, convergence
always holds for short time. 

\noindent For certain classes of finite state mean field games, called potential mean field games, system \eqref{hsys} can be regarded as the gradient of a 
Hamilton-Jacobi equation, see \cite{LCDF, GMS2}. We use this property to validate the presented numerical method. Our computational experiments show the expected formation of shocks. We analyse these shock structures,
by introducing an auxiliary conservation law. This allows us to derive a Rankine-Hugoniot condition that characterises the qualitative behaviour of such systems. \\

\noindent This paper is organised as follows: we start section \ref{smfg} by recalling the $N+1$ player model, which gives rise
both to \eqref{hsys} and to the numerical method presented here.
Section \ref{ses} focuses on two-state mean field games, the numerical implementation of \eqref{hsys}
and our applications in socio-economic sciences.  We illustrate the behaviour of the various model
in section \ref{s:numericalexamples}. In section \ref{sstsp} we briefly analyse  the shock structure for two-state mean field games.

\section{Finite state mean field games}
\label{smfg}

We start with a more detailed presentation of finite state mean field games and the formal derivation of \eqref{hsys}.

\subsection{$N+1$ player problem}

We consider a system of $N+1$ identical players or agents. We fix one of them, called the {\em reference player}, and
denote by $\bi^t$ its state at time $t$. All other players can be in any state $j\in \Ii$ at each time $t$. We denote by 
$\bn_j^t$ the number of
players (distinct from the reference player) at state $j$ at time $t$, and $\bn^t=(\bn_1^t, \hdots,  \bn_d^t)$.   
Players change their states according to two mechanisms: either by a 
Markovian switching rate chosen by the player, or by interactions with the other players.
We assume for the moment that all players (except the reference player) have
chosen an identical Markovian strategy $\beta(n,i,t)\in (\Rr^+_0)^d$. 
The reference player is allowed to choose a possibly different strategy 
 $\alpha(n,i,t)\in (\Rr^+_0)^d$. Given these strategies, the joint process $(\bi^t, \bn^t)$ is a time
 inhomogeneous Markov chain (neither $\bn^t$ nor $\bi^t$ are Markovian when considered separately).
 The generator of this process can be written as
\[
A\varphi^i_n=A^\alpha_0 \varphi^i_n+
A^{\beta}_1\varphi^i_n+A^\omega_2 \varphi^i_n,  
\]
where the definition of each term will be given in what follows. Let
$e_k$ be the $k-th$ vector of the canonical basis of $\Rr^d$ and  $e_{jk}=e_j-e_k$. Then 
\[
A^\alpha_0 \varphi^i_n=
\sum_{j\in \Ii} 
\alpha_j \ \left(\varphi^j_{n}-\varphi^i_n \right),
\]
\[
\displaystyle A^{ \beta}_1\varphi^i_n=
\sum_{j, k\in \Ii}\gamma_{\beta,jk}^{n,i} \left(\varphi^i_{n+e_{jk}}-\varphi^i_n \right),
\]
and
\begin{equation*}
\resizebox{1\hsize}{!}{$
\displaystyle A^\omega_2 \varphi^i_n=
\sum_{j, k\in \Ii}
\frac{\omega_{jk} n_j n_k}{N^2}
\left(
\varphi^i_{n+e_{jk}}+\varphi^i_{n+e_{kj}}-2\varphi^i_n
\right)
+
\sum_{j\in \Ii}
\displaystyle \frac{\omega_{ij} n_j}{N^2}
\left(
\varphi^j_{n}+\varphi^i_{n+e_{ij}}-2\varphi^i_n
\right).
$
}
\end{equation*}
The terms $A^\alpha_0$ and $A^\beta_1$ correspond to the transitions due to the switching strategies $\alpha$ and $\beta$.
Select one of the players distinct from the reference player. Denote by $\bk^t$ its position at time $t$,
and call $\bom^t$ the vector $\bom^t=\bn^t+e_{\bi^t\bk^t}$, the process that records the number of other players in any state from the point of view of this player. 
Suppose further that there are no interactions ($A_2^\omega=0$). Then for $j\neq k$, we have
$$
\mathbb{P}\Big(\bk^{t+\delta}=j\|\bom^t=m,\bk^t=k\Big)= \beta_{j}(m,  k, t)  \ \delta+o(\delta)\,.
$$
Assuming symmetry and independence of transitions 
from any state $k$ to a state $j$, $k \neq j$, we have
\begin{align*}
\mathbb{P}\Big(\bn^{t+\delta}=n+e_{jk}\|\bn^t&=n,\bi^t=i\Big)= \gamma_{\beta,jk}^{n,i}(t)  \ \delta+o(\delta)\,, 
\end{align*}
where  the transition rates of the process $\bn^t$ are given by
\begin{align}\label{gamma}
\gamma_{\beta,jk}^{n,i}(t)&= n_k  \ \beta_{j}(n+e_{ik},k, t).
\end{align}
Similarly, the reference player switching probabilities are
\[
\mathbb{P}\Big(\bi^{t+\delta}=j|\bi^t=i, \bn^t=n\Big)=\alpha_j(n, i, t)  \ \delta +o(\delta). 
\]
The transitions between different states due to interactions  give rise to the
term $A^\omega_2$. Its particular structure comes from the assumption that 
any two players with distinct states $j$ and $k$
can meet with rate $\frac{\omega_{jk}}{N}$ (with $\omega_{kj}=\omega_{jk}\geq 0$). As a result of this interaction, 
either both end in state $j$ or state $k$ (with probability $\frac 1 2$ respectively).

\noindent We assume that all players have the same running cost determined by a function
$c:\Ii\times \Pp(\Ii)\times (\Rr_0^+)^d\to \Rr$ as well as an identical terminal cost $\psi(\theta)$, which is
Lipschitz continuous in $\theta$.
The running cost $c(i, \theta, \alpha)$ depends on the state $i$ of the player, the mean field $\theta$, that is the distribution of players among states, and on 
the switching rate $\alpha$. 
As in \cite{GMS2}, we suppose that $c$ is 
Lipschitz continuous in $\theta$ with a Lipschitz constant (with respect to $\theta$) bounded independently of $\alpha$.
Let the running cost $c$ be differentiable with respect to $\alpha$, and $\frac{\partial c}{\partial \alpha}(i, \theta, \alpha)$
be Lipschitz with respect to $\theta$, uniformly in $\alpha$. We assume that for each $i\in \Ii$, 
the running cost $c(i,\theta,\alpha)$ does not depend on the $i$-{th} coordinate $\alpha_{i}$ of $\alpha$. Furthermore we make
the additional assumptions on $c$:
\begin{enumerate}
\item[(A1)] For any $i\in \Ii$, $\theta\in \Pp(\Ii)$, $\alpha,  \alpha'\in (\Rr_0^+)^d$, with $\alpha_j \neq  {\alpha}_{j}'$, for
some $j\neq i$,
\begin{equation}
\label{convc}
c(i,\theta,\alpha\,')-c(i,\theta,\alpha)\geq \nabla_{\alpha}\  c(i,\theta,\alpha)\cdot(\alpha\,'-\alpha)+\gamma\|\alpha\,'-\alpha\|^2. 
\end{equation}
\item[(A2)] The function $c$ is superlinear on $\alpha_{j}$, $j \neq i$, that is,
\[
\lim_{\alpha_j\to\infty} \frac{c(i, \theta,  \alpha)}{\|\alpha\|}\to \infty.
\]
\end{enumerate}

\noindent Let us fix a reference player and set the Markovian strategy $\beta$ for the remaining $N$ players. 
The objective of the reference player is 
to minimise its total cost, whose minimum 
over all Markovian strategies $\alpha$
is given by
\begin{equation}\label{defu} 
u^{i,\beta}_n(t)= \inf_{\alpha}\mathbb{E}^{\beta,\alpha}_{(\bi_t, \bn_t)=(i, n)} \left[ \int_t^T c\left(\bi_s,\frac{\bn_s}N,\alpha(s)\right)ds + \psi^{\bi_T}
\left(\frac{\bn_T}N\right) \right]\;.
\end{equation}

\noindent We define $ \displaystyle \Delta_i \varphi_n (t) = \left(\varphi_n^1(t)-\varphi_n^i(t), \hdots , \varphi_n^d(t)-\varphi_n^i(t)\right)$.
The generalised Legendre transform of $c$ is given by
\begin{equation}
\label{hami}
h(z, \theta, i)=\min_{\mu\in {(\Rr_0^+)^d}} c(i, \theta, \mu) +\mu\cdot \Delta_i z.
\end{equation}
Note that $h$ only depends on the differences between coordinates of the variable $z$, that is, if
$\Delta_i z=\Delta_i\tilde z$ then $h(z, \theta, i)=h(\tilde z, \theta, i)$.

\noindent
The function $u_n^{i,\beta}$ is the solution to the ODE
\[
\displaystyle -\frac{\partial u_n^{i,\beta}}{\partial t}
=h \left(u_n^{i,\beta}, \frac{n}{N}, i\right)+
A^\beta_1 u_n^{i,\beta}+A^\omega_2 u_n^{i,\beta}. 
\]
Next we define, for $j\neq i$, 
\begin{align}
\alpha^*_j(z,\theta, i)=\argmin_{\mu\in {(\Rr_0^+)^d}}c(i, \theta, \mu) +\mu\cdot \Delta_i z. 
\label{alpha_expression} 
\end{align}
If $h$ is 
differentiable, 
for $j \neq i$,
\begin{equation}
\label{otheralpha_expression}
\alpha_j^*(\Delta_iz,\theta,i) = \frac{\partial h\left(\Delta_iz,\theta,i\right)}{\partial z^j}.
\end{equation}
For convenience and consistency with \eqref{otheralpha_expression}, we require
\begin{equation}
\label{sumalf}
\sum_{j\in \Ii}\alpha^*_j(z,\theta, i)=0.
\end{equation}
Then the optimal strategy for the reference player is given by
\[
\bar \alpha(n, i, t)=\alpha^*\left(\Delta u_n^i,\frac n N, i\right). 
\]
We say that a strategy $\beta$ is a Nash equilibrium if
the optimal response of the reference player is $\beta$ itself, i.e., $\beta=\bar \alpha$. 
Thus setting $u_n^i=u_n^{i,\bar \alpha}$ we have the Nash 
equilibrium equation for the value function 
\begin{align}
\label{npu}
&-\frac{\partial u_n^i}{\partial t}
=h\left(u_n^i, \frac{n}{N}, i\right)+
\sum_{j, k\in\Ii}\gamma_{jk}^{n,i} \left(u^i_{n+e_{jk}}-u^i_n\right)\\\notag
&\quad +\sum_{j, k\in\Ii}
\frac{\omega_{jk} n_j n_k}{N^2}
\left(u^i_{n+e_{jk}}+u^i_{n+e_{kj}}-2u^i_n
\right)
+
\sum_{j\in\Ii}
\frac{\omega_{ij} n_j}{N^2}
\left(
u^j_{n}+u^i_{n+e_{ij}}-2u^i_n
\right), 
\end{align}
where we define
\begin{equation}
\label{gamadef}
\gamma_{jk}^{n,i}(t)= n_k \bar \alpha_{j}\left(n+e_{ik},k, t\right).
\end{equation}

\subsection{Formal asymptotic behaviour}

Now we investigate the asymptotic behaviour as $N\to \infty$ of the $N+1$ player dynamics \eqref{npu}. 
For that we suppose there is a smooth function $U:\Pp(\Ii)\times[0,T]\to \Rr^d$ such that 
\[
u_n^i(t)=U^i\left(\frac n N, t\right).
\]
Then we have the following expansions:
\begin{equation*}
\resizebox{.985\hsize}{!}{$
A^{\bar \alpha}_1u_n^i=\sum\limits_{j, k\in\Ii}
\theta_k \left[ \left(1+\frac{\partial_{\theta_i}-\partial_{\theta_k} }{N} \right)\alpha^*_j(\Delta_k U, \theta, k)\right]
\left[ \left( (\partial_{\theta_j}-\partial_{\theta_k} ) +\frac{(\partial_{\theta_j}-\partial_{\theta_k})^2}{2N}\right) U^i\right]+O\left(\frac 1 {N^2} \right),
$} 
\end{equation*}
\begin{align*}
A^\omega_2 u_n^i=&\sum_{j, k\in\Ii}
\frac{\omega_{jk}}{N} \theta_j\theta_k \left(\partial^2_{\theta_j \theta_j} U^i+\partial^2_{\theta_k \theta_k} U^i -2\partial^2_{\theta_j \theta_k} U^i\right)
+\sum_{j\in\Ii}\omega_{ij}(U^j-U^i) \\
&
+\sum_{j\in\Ii} \omega_{ij} \ \theta_j \left(\partial_{\theta_i} U^i-\partial_{\theta_j} U^i \right)+O\left(\frac{1}{N^2}\right).
\end{align*}
We observe that for fixed $j$ and $k$ the operators
$
\frac{\theta_k \ \alpha^*_j}{2} \left(\partial_{\theta_j}-\partial_{\theta_k} \right)^2$, and \linebreak $\displaystyle \omega_{jk} \ \theta_j\theta_k \left(\partial_{\theta_j}-\partial_{\theta_k}\right)^2
$ 
are degenerate elliptic operators. The first one is degenerate because $\theta_k, \alpha^*_j\geq 0$; the second because $\omega_{jk}\geq 0$. 
Hence their sum is also a degenerate operator. Therefore the combination of the second order terms in the expansion of $A^{\bar \alpha}_1$ and $A^\omega_2$
can be written as
\[
 \sum\limits_{l,m\in\Ii} b_{lm} \ \partial^2_{\theta_l \theta_m}=
\sum\limits_{j,k\in\Ii}
\frac{\theta_k \ \alpha^*_j+2 \omega_{jk} \ \theta_j \theta_k}{2} \ \left(\partial_{\theta_j}-\partial_{\theta_k}\right)^2, 
\]
for a suitable non-negative matrix $b$. We conclude that \eqref{npu} can be formally  approximated 
by the parabolic system
\begin{equation}
\label{psys}
- \ U_t^i(\theta,t) = \sum_{j\in\Ii} g_j^N(U, \partial_\theta U,\theta,  i) \ \partial_{\theta_j} U^i + h(U, \theta,i) + \frac 1 N \sum_{l,m \in\Ii}\ b_{lm}(U, \theta)  \ \partial_{\theta_l \theta_m}^2 U^i, 
\end{equation}
with suitable $g^N:\Rr^d\times\Rr^d\times\Pp(\Ii)\times \Ii\to \Rr^d$.
Furthermore $g^N$ converges locally uniformly in compacts to
\begin{equation}
\label{formforg}
g_j(U,\theta)=\sum_{i\in\Ii} \theta_i\alpha^*_j(U, \theta, i).
\end{equation}
This implies that the limit of \eqref{psys} is \eqref{hsys}, which does not depend on the interaction between players~$(\omega_{jk})$. 
Note that 
\begin{equation}
\label{symmetrygj}
\sum_{j \in \Ii } g_j(U,\theta) = \sum_{j \in \Ii} \ \sum_{i \in \Ii } \theta_i \alpha_j^*(U,\theta,i)
= \sum_{i \in \Ii} \theta_i \ \sum_{j \in \Ii} \alpha_j^*(U,\theta,i) =  0,
\end{equation}
since $\sum\limits_{j \in \Ii } \alpha_j^*(U,\theta,i) =  0$, from \eqref{sumalf}. 
Additionally, 
\begin{equation}
\label{propg}
g_j(U, \theta, i)=g_j(\Delta_i U, \theta, i),
\end{equation}
using in \eqref{formforg} the equation \eqref{alpha_expression}.

\subsection{Potential mean field games}
\label{secpmfg}

Next we consider a special class of mean field games, in which system \eqref{hsys} can be written as
the gradient of a Hamilton-Jacobi equation. Suppose that
\begin{equation}
\label{seph}
h(u, \theta, i)=\tilde h(u, i)+f(i, \theta), \ \ \ i \in \Ii,   
\end{equation}
and $f(i, \theta)=\partial_{\theta_i} F(\theta)$, for some potential $F:\Rr^d\to \Rr$. We set
\begin{equation}
\label{capH}
H(u, \theta)=\sum_{k \in \Ii} \theta_k \ \tilde h(\Delta_k u, k) +F(\theta). 
\end{equation}
Let $\Psi_0: \mathbb{R}^d \rightarrow \mathbb{R}$ be a continuous function and consider 
a smooth enough solution
$\Psi: \mathbb{R}^d \times[0,T] \rightarrow \mathbb{R}$ to the Hamilton-Jacobi equation
\begin{equation}
\label{e:hj}
\begin{cases}
 \displaystyle -\frac{\partial \Psi(\theta,t)}{\partial t} = H \left(\partial_{\theta} \Psi, \theta \right), & \vspace{0.15cm} \\ 
\Psi(\theta, T) = \Psi_0(\theta).&
\end{cases}
\end{equation}
Note that $\theta\in \Rr^d$. In some cases it is possible to reduce the dimensionality at the price of introducing suitable boundary conditions (see section \ref{s:numericalexamples}-(\ref{s:numpotmfg})). This reduction will be used in the applications presented later.

\noindent Set $U^j(\theta, t)=\partial_{\theta_j} \Psi(\theta, t)$. If we differentiate \eqref{e:hj} with respect to $\theta_i$ we obtain
\begin{align*}
 -U^i_t = \sum_{j \in \Ii} \partial_{u_j} H(U,\theta) ~ \partial_{\theta_i} U^j + \tilde h(\Delta_i U, i) + \partial_{\theta_i} F.
\end{align*}
The first  term on the right hand side can be written as 
\begin{align*}
\sum_{j \in \Ii}  \partial_{u_j} H(U,\theta) \ \partial_{\theta_i} U^j  =
\sum_{k,j \in \Ii} \theta_k \ \partial_{u_j} \tilde h(\Delta_k U, k) \  \partial_{\theta_i} U^j
=\sum_{j \in \Ii} g_j (U, \theta) \ \partial_{\theta_j} U^i, 
\end{align*}
taking into account the identity $\partial_{\theta_i} U^j=\partial_{\theta_j} U^i$.
From this we get that
\[
 -U^i_t = \sum_{j \in \Ii} g_j (U, \theta) \ \partial_{\theta_j} U^i + \tilde h(\Delta_i U, i) + \partial_{\theta_i} F,
\]
and deduce, using \eqref{seph}, that $U^i$ is indeed a solution of \eqref{hsys}. 

\noindent Potential mean field games have remarkable properties and connections to calculus of variations. For instance, long time convergence properties
 of these problems can be addressed through $\Gamma$-convergence techniques, see for instance \cite{FG13}.

\section{Two-state mean field games in socio-economic sciences}\label{ses}
 
In this section we present several applications of
finite state mean field games to socio-economic sciences. In order to keep the presentation simple
we consider only two-state problems.   
We start by stating the explicit equations, where agents can choose between two options. 
The classical examples discussed here are the consumer choice behaviour and a paradigm shift model in the scientific 
community. Note that the number of choices can be increased, but we focus on two-state applications for reasons of clarity
and readability.

\subsection{Two-state problems}
\label{tsp}

\noindent Consider a two-state mean field game, where the fraction of players in either state, $1$ or $2$, is given by $\theta_i$, $i=1,2$ with 
$\theta_1+\theta_2 = 1$, and $\theta_i \geq 0$. 
Since the limit equation \eqref{hsys} does not depend on the interactions (although the $N+1$ player model does), 
we set $\omega=0$. Note that $\omega\neq 0$ would result in different numerical methods (and potentially different solutions) for \eqref{hsys}.
We suppose further that 
the running cost $c = c(i,\theta,\mu)$ in \eqref{defu} depends quadratically on the switching rate $\mu$, i.e.
\begin{align}\label{e:cost}
c(i,\theta,\mu) = f(i,\theta) + c_0(i,\mu), \text{ with } c_0(i,\mu) = \frac{1}{2} \sum_{j \neq i}^2 \mu_j^2.
\end{align}
Then
\begin{align}
\label{e:h}
\hspace{-0.18cm}
h(z,\theta,1) = f(1,\theta) - \frac{1}{2} \left((z^1-z^2)^+ \right)^2 \text{ and }
 h(z,\theta,2) = f(2,\theta) - \frac{1}{2} \left((z^2-z^1)^+ \right)^2.
\end{align}
The optimal switching rate $\alpha^*$ is given by:
\begin{align*}
\alpha^*(z,\theta,1) = \argminmu \bigl[ f(1,\theta) + \frac{1}{2} \mu_2^2 + \left(\begin{smallmatrix}\mu_1\\\mu_2\end{smallmatrix}\right) \cdot \left(\begin{smallmatrix} 0 \\ z^2-z^1 \end{smallmatrix}\right)
\bigr] \Rightarrow \alpha_2^*(z,\theta,1) = (z^1-z^2)^+,
\end{align*}
\begin{align*}
\alpha^*(z,\theta,2) = \argminmu \left[ f(2,\theta) + \frac{1}{2} \mu_1^2 + \left(\begin{smallmatrix}\mu_1\\\mu_2\end{smallmatrix}\right) \cdot \left(\begin{smallmatrix} z^1-z^2 \\ 0 \end{smallmatrix}\right) 
\right] \Rightarrow \alpha_1^*(z, \theta,2) = (z^2-z^1 )^+.
\end{align*}
Since
\begin{align*}
&\alpha^*_1(U,\theta,1) = - \left(U^1 - U^2\right)^+; \hspace{1cm} \alpha^*_2(U,\theta,1) = \ \ \left(U^1 - U^2\right)^+; \\ 
&\alpha^*_1(U,\theta,2) = \ \ \left(U^2 - U^1\right)^+; \hspace{1cm} \alpha^*_2(U,\theta,2) = - \left(U^2 - U^1\right)^+,
\end{align*}
we conclude from \eqref{formforg} that
\begin{align}
g_1(U,\theta) &= -\theta_1  \left(U^1 - U^2\right)^+ + \theta_2 \ \left(U^2 - U^1\right)^+, \label{formg1}
\\
g_2(U,\theta) &= \ \theta_1 \ \left(U^1 - U^2\right)^+ - \theta_2 \ \left(U^2 - U^1\right)^+ =- \ g_1(U, \theta). \nonumber
\end{align}
\noindent Note that if the function $f$ is a gradient field, i.e. $f=\nabla F$, the two-state problem is a potential mean field game, cf. section \ref{smfg}-(\ref{secpmfg}). 
In this case, for $p=(p_1, p_2)\in \Rr^2$, \eqref{capH} is given by 
\begin{align}\label{e:Hamiltonian}
H(p, \theta)=F(\theta)-\frac{\theta_1 \left((p_1-p_2)^+\right)^2+\theta_2 \left((p_2-p_1)^+\right)^2}{2}. 
\end{align}

\noindent The above calculations allow us to introduce a numerical method  based on the two-state mean field model for $N+1$ players. Let $n_i$, $i=1,2$ denote
the number of players in state $i$ (as seen by the reference player excluding itself) and $N = n_1+n_2$.
The vector $n$ gives the number of players in each state, i.e. $n = (n_1, n_2) = (n_1, N-n_1)$. 
As in \eqref{gamadef}  we have
\begin{align*}
\gamma^{n,1}_{jl} = n_l \ \alpha^*_j \left(\Deltak{l} \un{}{1}{l},\frac{n+e_{1l}}{N},l\right) \text{ and } \gamma^{n,2}_{jl} = n_l \ \alpha^*_j \left(\Deltak{l} \un{}{2}{l},\frac{n+e_{2l}}{N},l \right).
\end{align*}

\noindent Then equation \eqref{npu} for the value function $u^i_n$ reads as
\begin{align}\label{e:ui}
\left\{
\begin{array}{l}
\displaystyle - \frac{d\unone}{dt} = \sum\limits_{j,l=1}^2 \gammanikj{1} \left(\un{1}{j}{l} - \unone\right) + h\left(\Deltak{1} \vecun,\fractionsc,1\right), \vspace{0.10cm} \\ 
\displaystyle - \frac{d\untwo}{dt} = \sum\limits_{j,l=1}^2 \gammanikj{2} \left(\un{2}{j}{l} - \untwo\right) + h\left(\Deltak{2} \vecun,\fractionsc,2\right),
\end{array}
\right. 
\end{align}
which can be rewritten as
\begin{equation}
\label{e:discretescheme}
\hspace{-0.048cm}
\begin{cases}
\displaystyle -\frac{d \unone}{dt} =& (N-n_1) \ \alpha^*_1 \left(\Deltak{2} \un{}{1}{2}, \fractiononetwo, 2\right) \left(\un{1}{1}{2} - \unone \right) \vspace{0.12cm}  \\
 &+ \ n_1 \ \alpha^*_2 \left(\Deltak{1} \vecun, \fractionsc, 1 \right) \left(\un{1}{2}{1} - \unone \right) +  h \left(\Deltak{1} \vecun, \fractionsc,1 \right), \vspace{0.20cm} \\
\displaystyle -\frac{d \untwo}{dt} =& (N-n_1) \ \alpha^*_1 \left(\Deltak{2} \vecun, \fractionsc, 2 \right) \left(\un{2}{1}{2} - \untwo \right) \vspace{0.12cm}  \\
&+ \ n_1 \ \alpha^*_2 \left(\Deltak{1} \un{}{2}{1}, \fractiontwoone, 1 \right) \left(\un{2}{2}{1} - \untwo \right) +  h \left(\Deltak{2} \vecun, \fractionsc,2 \right). 
\end{cases}
\end{equation}

\noindent Since $\theta_1+\theta_2=1$ we use $\theta=(\zeta, 1-\zeta)$, for $\zeta\in [0,1]$. We split the domain $[0,1]$ into $N$ equidistant subintervals and define $\zeta_k = \frac{k}{N}$, $0\leq k\leq N, \ k\in \Nn$. The variable $\zeta_k$ corresponds to the fraction of players in state $1$. Then the fraction of players in state $2$ is given by $1-\zeta_k = \frac{N-k}{N}$.
Consequently \eqref{e:discretescheme} takes the form 
\begin{equation}
\label{e:discrete}
\resizebox{0.913\hsize}{!}{$
\begin{cases}
\displaystyle -\frac{d u_k^1}{dt} &= N(1-\zeta_k) \left(u^2_{k+1} - u^1_{k+1}\right)^+ \left(u_{k+1}^1-u_k^1\right)\vspace{0.12cm} \\
& + N \zeta_k \left(u_k^1 - u_k^2\right)^+\left(u_{k-1}^1 - u_k^1\right) + f(1,\zeta_k) - \frac{1}{2} \left(\left(u_k^1-u_k^2\right)^+\right)^2,\vspace{0.20cm}  \\
\displaystyle -\frac{d u_k^2}{dt} &= N(1-\zeta_k) \left(u^2_k - u^1_k\right)^+ \left(u_{k+1}^2-u_k^2\right) \vspace{0.12cm}\\
& + N \zeta_k \left(u_{k-1}^1 - u_{k-1}^2\right)^+\left(u_{k-1}^2 - u_k^2\right) + f(2,1-\zeta_k) - \frac{1}{2} \left(\left(u_k^2-u_k^1\right)^+\right)^2.
\end{cases}
$}
\end{equation}
Note that in system \eqref{e:discrete} the terms $\zeta_k$ and $(1-\zeta_k)$ vanish for $k=0$ and $k=N$, thus
no particular care has to be taken concerning the ghost points at $\zeta_{N+1}$ and $\zeta_{-1}$. This is the discrete analogue to 
not imposing boundary conditions on \eqref{hsys}. A similar situation occurs in 
state constrained problems for Hamilton-Jacobi equations. 

\subsection{Paradigm shift}\label{s:paradigm}

\noindent According to Kuhn, see \cite{Kuhn}, a paradigm shift corresponds to a change in a basic assumption within the ruling theory of science. Classical cases of
paradigm shifts are the transition from Ptolemaic cosmology to Copernican one, the development of quantum mechanics which replaced
classical mechanics on the microscopic scale or the acceptance of Mendelian inheritance as opposed to Pangenesis. Bensancenot and Dogguy modelled
a paradigm shift in a scientific community by a two-state mean field game approach and analysed the competition between two different scientific
hypothesis, see \cite{BesancenotDogguy}. In our example we consider a simpler model, but follow their general ideas and assumptions. \\
\noindent Let us consider a scientific community with $N$ researchers working on two different hypothesis. Each researcher working on
paradigm $i$, $i=1,2$, wants to maximise his/her productivity measured by a cost function of the form \eqref{e:cost}.
Here the function $f = f(i, \theta)$ corresponds to the productivity of a researcher working on paradigm $i$, and
 $ c_0(i,\mu) = -\frac{1}{2} \sum_{j \neq i}^2 \mu_j^2$ to the cost of switching to the other objective. Note the negative sign of the switching costs,
since agents want to maximise their productivity. We assume that the productivity is directly related to the number of 
researchers working on the paradigm, since for example more scientific activities like conference and collaborations.  In the
case of two different fields, $\theta_1$ gives the fraction of researchers working on paradigm $1$ and $\theta_2 = 1-\theta_1$ on paradigm $2$. We choose 
the functions $f(i,\theta)$, $i=1,2$, of the form
\begin{align}\label{e:paradigm}
& f(1,\theta) = \left[a_1 \ {\theta_1}^r + (1-a_1) \ (1-\theta_1)^r\right]^\frac{1}{r},\\ 
& f(2,\theta) = \left[a_2 \ (1-\theta_2)^r + (1-a_2) \ {\theta_2}^r\right]^\frac{1}{r} \nonumber.
\end{align} 
These functions are called {\em productivity functions with constant elasticity of substitution} and are commonly used in economics to combine 
two or more productive inputs (in our case scientific activities in the different fields) to an output quantity. The constant $r \in \mathbb{R}, r \neq 0$,
denotes the {\em elasticity of substitution}, and it measures how easy one can substitute one input for the other. The constants $a_i \in [0,1]$ measure the dependence of paradigm $i$ with respect to the other. If $a_i$ is close to one,
the field is more autonomous and little influenced by the activity in the other field. 

\subsection{Consumer choice}\label{s:mobilephone}
\noindent Consumer choice models relate preferences to consumption expenditure. We consider two choices of consumption goods and denote by $\theta_1$ the
fraction of agents consuming good $1$ and by $\theta_2 = 1-\theta_1$ the fraction consuming good $2$. We assume that the price of a good is strongly
determined by the consumption rate, in particular we choose, for $i=1,2$,  
\begin{align}\label{fcc}
f(i,\theta) &= 
\begin{cases} \displaystyle \frac{\theta_i^{1-\eta}-1}{1-\eta} + s_i,  &\eta > 0, \eta \neq 1, \vspace{0.12cm} \\
\ln(\theta_i)+ s_i, &\eta = 1,
\end{cases} 
\end{align}
where $s_i \in \mathbb{R}^+$ corresponds to the minimum price of the good. In economic literature the function $f$ is called the {\em isoelastic utility function}. 
 
\section{Numerical simulations}\label{s:numericalexamples}

We illustrate the behaviour of the discrete system \eqref{e:discrete} with several examples. Let $N=100$, i.e.
the interval $[0,1]$ is discretized into $100$ equidistant intervals.
Each grid point corresponds to the the percentage
of players being in state 1. System \eqref{e:discrete} is solved using an explicit in time discretization with
time steps of size $\Delta t = 10^{-4}$. In all examples in this section
the terminal time $T$ is set to $T=10$ if not stated otherwise.

\subsection{Numerical examples}

\paragraph{Example I (Shock formation):}In this first example we would like to illustrate the formation of shocks, a phenomena well known for
Hamilton-Jacobi equations. We choose a terminal cost of the form
\begin{align*}
u^1(\theta,10) = \theta_1-\frac{1}{2} \text{  and  } u^2(\theta, 10) = \theta_2-\frac{1}{2}, 
\end{align*}
a running cost as in \eqref{e:cost} with $f(1,\theta) = 1-\theta_1$ and $f(2,\theta) = 1-\theta_2 = \theta_1$.
Figure~\ref{f:ex1} clearly
illustrates the formation of a shock for smooth terminal data. This shock is also evident when we
consider the difference $u^1-u^2$ of the utilities. This difference is a relevant variable in this problem,
since both $g$ and $h$, given by \eqref{propg} and \eqref{hami} respectively, depend only on the difference 
between the utilities. This structure will be explored in more detail in section \ref{sstsp}.

\begin{figure}[htb!]
\begin{center}
\subfigure[Utility functions $u^1$ and $u^2$.]{\includegraphics[width=0.475\textwidth]{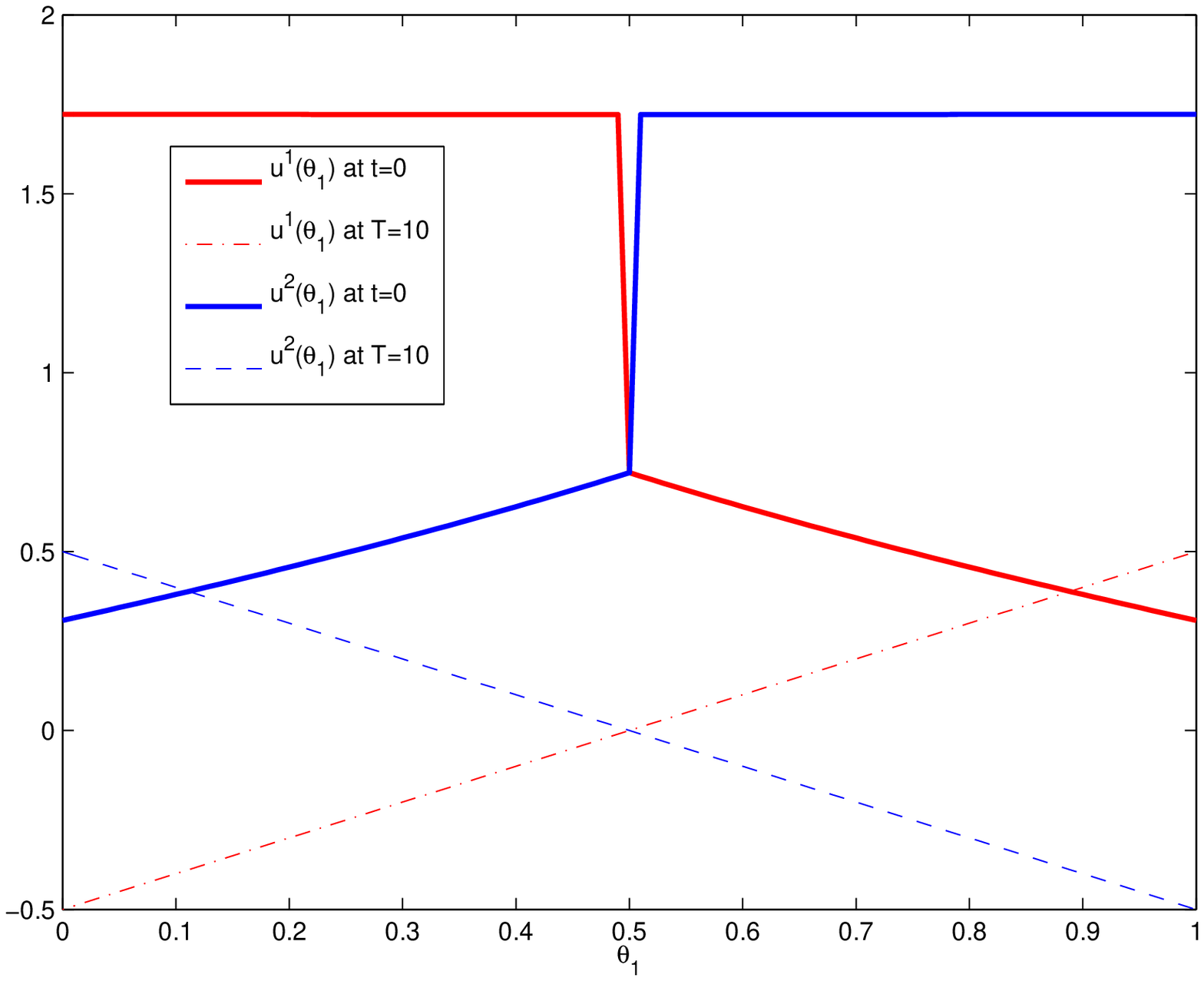}}\hspace*{0.5cm}
\subfigure[Difference $u^1-u^2$ at different times. ]{\includegraphics[width=0.475\textwidth]{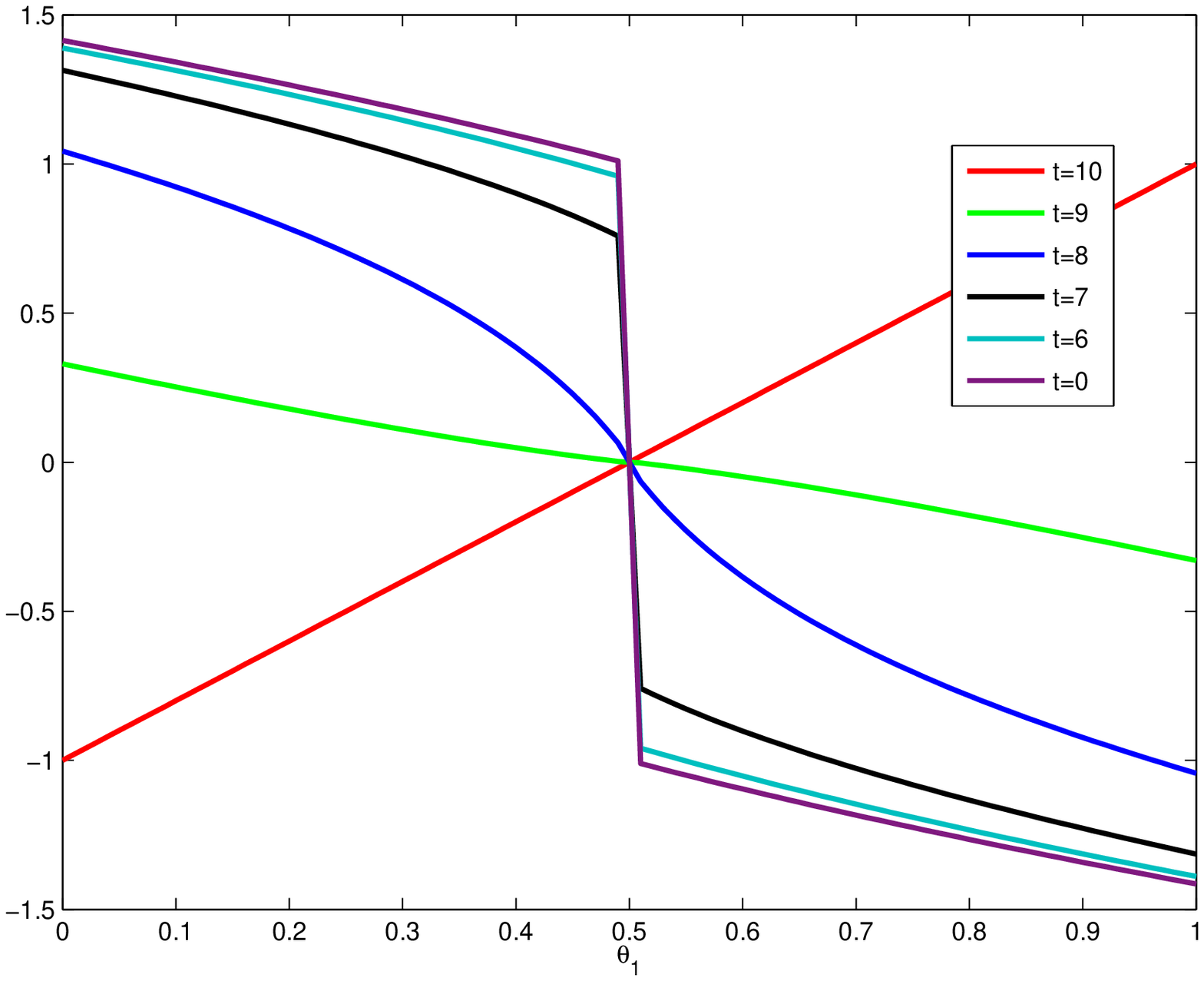}}
\caption{Example I - Shock formation.}\label{f:ex1}
\end{center}
\end{figure}
\paragraph{Example II (Paradigm shift):} In this example we illustrate the outcome of a two-state mean field game modelling a paradigm shift (section \ref{ses}-(\ref{s:paradigm})) within a scientific community. Note that we use the negative cost functional in this example, since we always consider minimisation problems. The terminal utilities are given by
\begin{align}\label{e:finalu}
u^1(\theta, T=10) = 1-\theta_1 \text{ and } u^2(\theta, T =10) = \theta_2, 
\end{align}
and the parameters in \eqref{e:paradigm} are set to
$a_1 = \frac{1}{2}, ~a_2 = \frac{9}{10},~r = \frac{3}{4}$.
\begin{figure}[htb!]
\begin{center}
\includegraphics[width=0.6\textwidth]{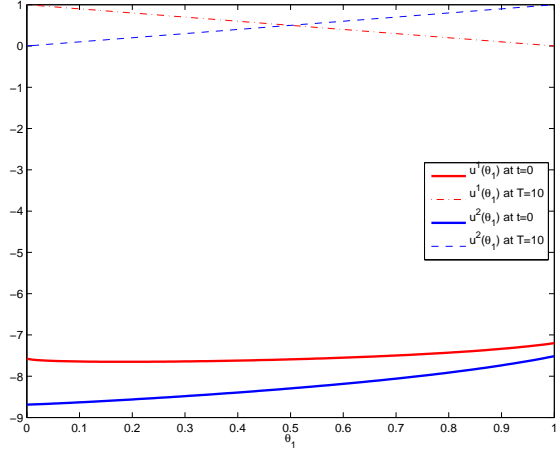}
\caption{Example II - paradigm shift}\label{f:ex2}
\end{center}
\end{figure} 
In figure \ref{f:ex2} we observe the paradigm shift within the scientific community. At $T=10$ the optimal states are $\theta_1=1$ and $\theta_2 = 1$ since the functions
$u_1$ and $u_2$ take their minimum value at these points respectively. In figure \ref{f:ex2} we observe that this is not the case at $t=0$. Here $u_1$ takes its minimum
value at $\theta_1 = 0$, i.e. paradigm 1 is not popular any more. 

\paragraph{Example III (Consumer choice):} 
In our final example we consider the consumer choice behaviour, see section \ref{ses}-(\ref{s:mobilephone}).
We set the final utility function to
\begin{align*}
u^1(\theta, 10) = 1-\theta_1 \text{ and } u^2(\theta, 10) = \theta_2.
\end{align*}
At time $T=10$ the utility functions take their minimum value at $\theta_1=1$ and $\theta_2=0$, i.e. their minimum corresponds to
the case that either all of them choose product 1 or product 2, respectively. Figure \ref{f:ex3} illustrates the utility functions for
two sets of parameters, namely
\begin{align*}
\eta = 0.5, ~ s_1 = 0.075, ~s_2 = 0.1 \text{ and } \eta = 1, ~s_1 = 0.1, ~ s_2 = 0.075.
\end{align*}
\begin{figure}[htb!]
\begin{center}
\subfigure[$\eta=0.5$, $s_1 = 0.075$ and $s_2 = 0.1$]{\includegraphics[width=0.475\textwidth]{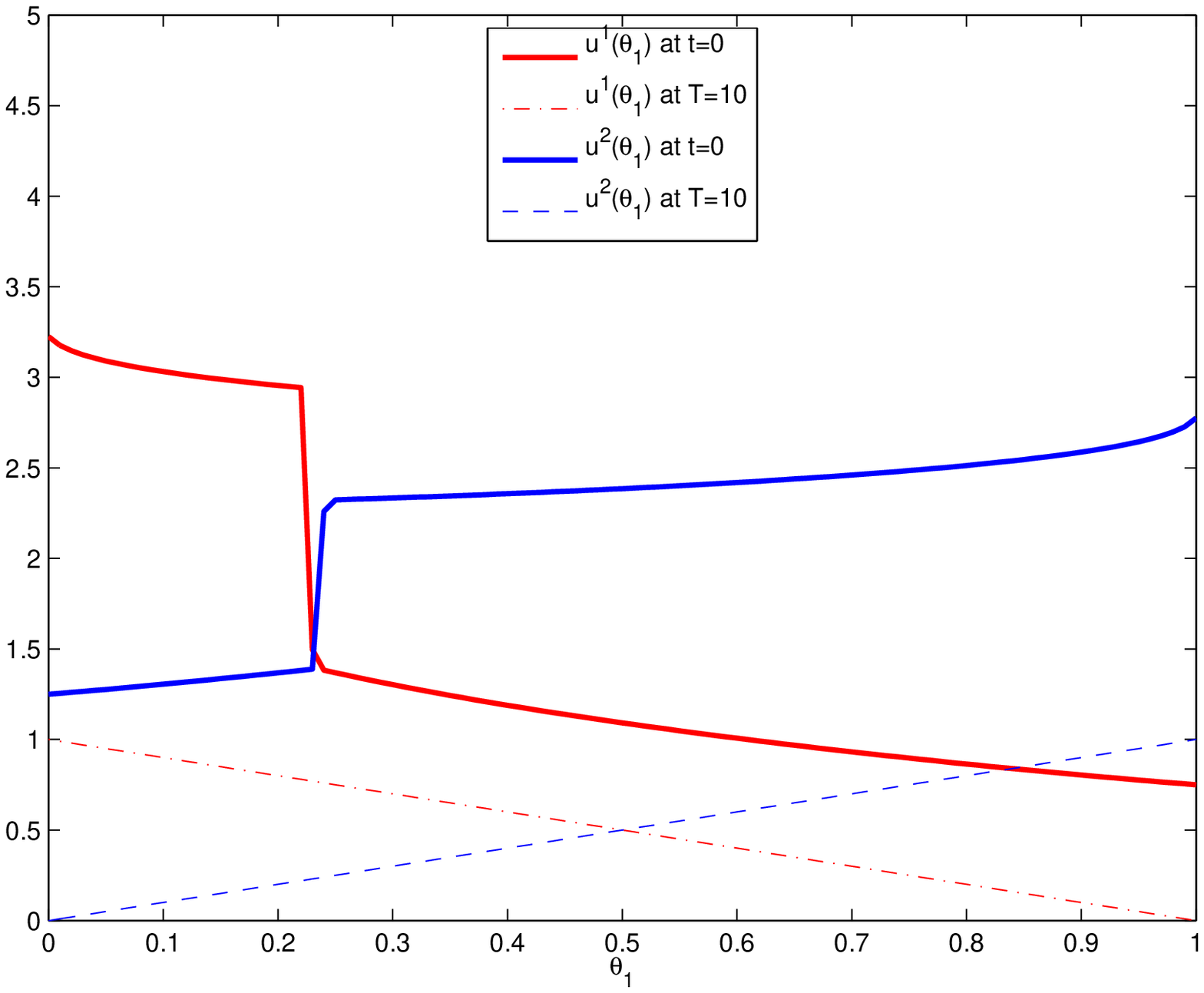} }
\subfigure[$\eta=1$, $s_1 = 0.1$ and $s_2 = 0.075$]{\includegraphics[width=0.475\textwidth]{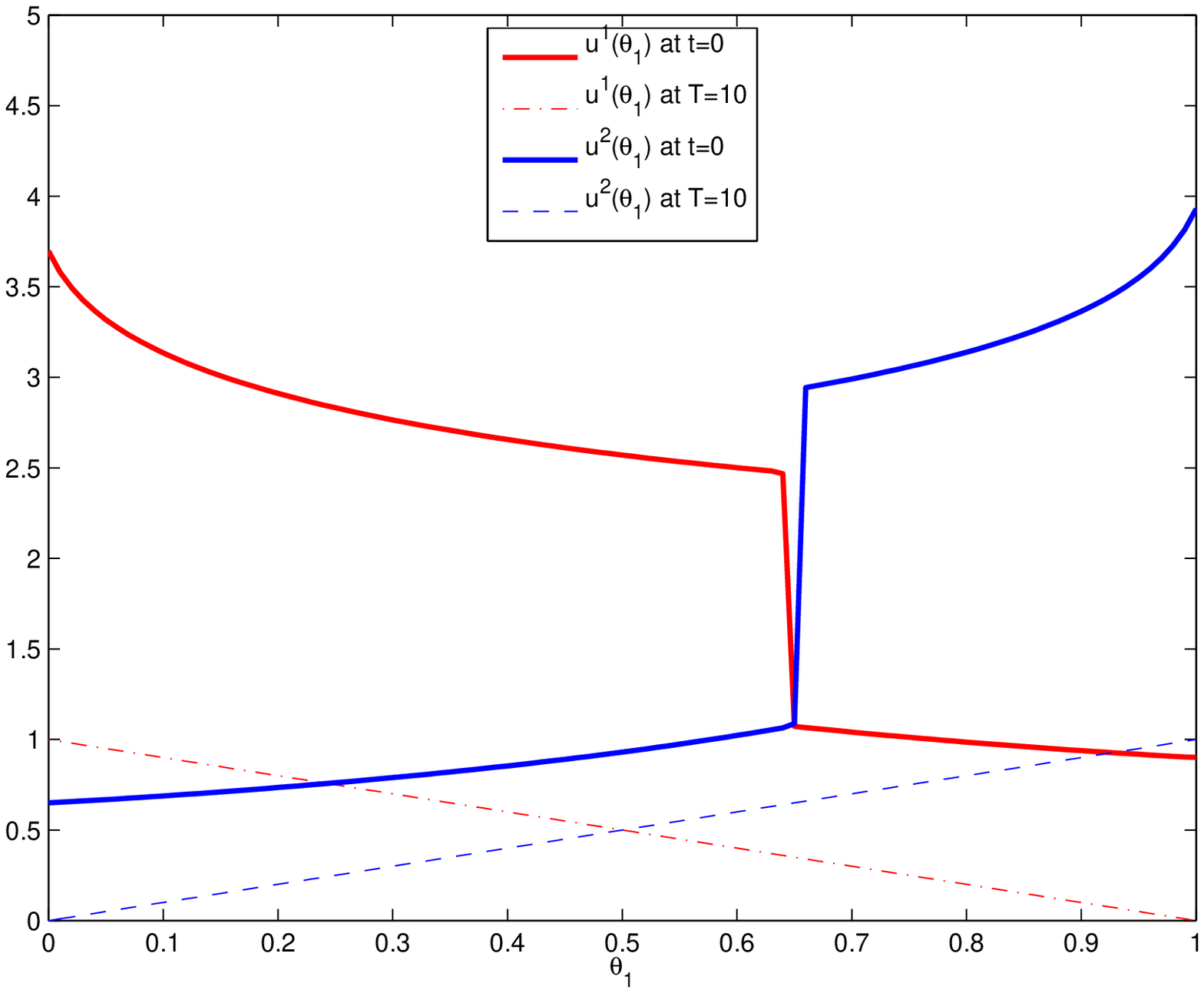} }
\caption{Example III - consumer choice }\label{f:ex3}
\end{center}
\end{figure}
We observe for the second set of parameters that $u^1$ takes its minimum value on the interval $\theta_1 \in [0.65,1]$. The jump at $\theta_1 \approx 0.65$ in both utilities
indicates the existence of a critical acceptance rate. If less than $65 \%$ of the consumers buy product 1, the price is increasing and the
product will not be competitive. 

\subsection{Potential mean field games}\label{s:numpotmfg}

\noindent In order to validate our methods, we consider Example~I, that can be written
as a potential mean field game  where $F$ and $H$ in \eqref{e:Hamiltonian} are given by
\begin{align*}
F(\theta_1, \theta_2) &= \theta_1 \theta_2, 
\end{align*}
\begin{align*}
H(p_1, p_2, \theta_1, \theta_2) &= - \frac{1}{2} \theta_1 ((p_1-p_2)^+)^2 - \frac{1}{2} \theta_2 ((p_2-p_1)^+)^2 + F(\theta_1, \theta_2).
\end{align*}
Then we compare the numerical simulations of \eqref{e:discrete}
with the ones for the corresponding Hamilton-Jacobi equation as we explain in what follows.  

\noindent For $i=1,2$, set
\begin{align*}
  f(i,\theta) = \frac{\partial}{\partial \theta_i} F(\theta_1, \theta_2) ~ \text{ and } ~ \Psi_0(\theta_1,\theta_2) = \frac{1}{2} \left(\theta_1 - \frac{1}{2} \right)^2 + \frac{1}{2} \left(\theta_2-\frac{1}{2}\right)^2.
\end{align*}
In order to simplify the numerical implementation, we perform a dimensionality reduction. 
Define $(\theta_1, \theta_2)=(\zeta, 1-\zeta)$, with $\zeta\in [0,1]$. 
 Let $\Psi$ be the solution to \eqref{e:hj} and define
\[
\Upsilon(\zeta, t)=\Psi(\zeta, 1-\zeta, t).
\]
We observe that 
\begin{equation}
\label{e:hj2}
-\frac{\partial \Upsilon}{\partial t}=\tilde H(\partial_\zeta \Upsilon, \zeta),
\end{equation}
where
\[
\tilde H(\partial_\zeta \Upsilon, \zeta)=- \frac{1}{2} \zeta \left[(\partial_\zeta \Upsilon)^+\right]^2 - \frac{1}{2} (1-\zeta) \left[(-\partial_\zeta \Upsilon)^+\right]^2 + F(\zeta, 1-\zeta).
\]

\noindent We use Godunov's method and an explicit Runge-Kutta method to discretize \eqref{e:hj2}.
Particular care has to be taken at the boundary. Since $\zeta \in [0,1]$ represents the first component of a probability vector, the natural boundary 
conditions for this problem are state constraints. A possibility to implement this is by 
supplementing \eqref{e:hj2} with large Dirichlet boundary values, i.e.
\begin{align}
\Upsilon(0,t) = \Upsilon(1,t) = c_D \text{ with } c_D \in \mathbb{R}^+.
\end{align}
\noindent To implement the Dirichlet boundary conditions we follow the works of Abgrall and Waagan, see \cite{Abgrall, Waagan}.
 Again we consider an equidistant discretization of the interval $[0,1]$  into $N$ subintervals of size $\Delta \zeta$, 
 and we approximate the solution $\Upsilon(\zeta, \tau)$ to \eqref{e:hj2} by $\Upsilon^\tau(\zeta) $
 for $\tau=T-l\Delta t $, and $\zeta=k \ \Delta\zeta$, $0\leq k\leq N$; $l,k\in \Nn_0$.  
 We set $\Upsilon^T(\zeta)=\Psi_0(\zeta, 1-\zeta)$. 
Then, the Godunov scheme can be written as
\begin{align}\label{e:godunov}
\Upsilon^{\tau-\Delta t} = \Upsilon^\tau - \Delta t  \ \hat{H} (\delta^-_\zeta \Upsilon^\tau,
 \delta^+_\zeta \Upsilon^\tau, \zeta),
\end{align}
where $\delta^+_\zeta$ and $\delta^-_\zeta$ are the difference operators
\begin{align*}
\delta^-_\zeta \Phi(\zeta) = \frac{\Phi(\zeta)-\Phi(\zeta-\Delta \zeta)}{\Delta \zeta} \text{ and }\delta^+_\zeta \Phi(\zeta) = \frac{\Phi(\zeta + \Delta \zeta)-\Phi(\zeta)}{\Delta \zeta}, 
\end{align*}
and $\hat{H}$ in \eqref{e:godunov} is given by
\begin{align*}
\hat{H}(\alpha, \beta, \zeta) &=
\begin{cases}
\min\limits_{\alpha \leq q \leq \beta} \tilde H(q,\zeta), \text{ if } \alpha \leq \beta; \\
\max\limits_{\beta \leq q \leq \alpha} \tilde H(q,\zeta), \text{ if } \beta \leq \alpha.
\end{cases}
\end{align*}
At the boundary $\zeta =0,1$, we set
\begin{align*}
\Upsilon^{\tau-\Delta t}(0) &= \min\left[\Upsilon^\tau(0) - \Delta t H^-(\delta^+_\zeta \Upsilon^\tau(0), 0),\  c_D\right], \\
\Upsilon^{\tau-\Delta t}(1) &= \min\left[\Upsilon^\tau(1) - \Delta t H^+(\delta^-_\zeta \Upsilon^\tau(1), 1),\  c_D\right],
\end{align*}
where $H^-(p,\zeta)  = \hat{H}(0,p,\zeta)$ and $H^+(p,\zeta) = \hat{H}(p,0,\zeta)$. Figure \ref{f:hj} shows the derivative of $U$ with respect to $\theta_1$ as well as the
difference $u^1-u^2$ calculated from \eqref{e:discrete} at time $t=0$. The same spatial and temporal discretization (i.e. $N=100$ and $\Delta t=10^{-4}$) was used in both simulations.
\begin{figure}
\begin{center}
\includegraphics[width=0.6\textwidth]{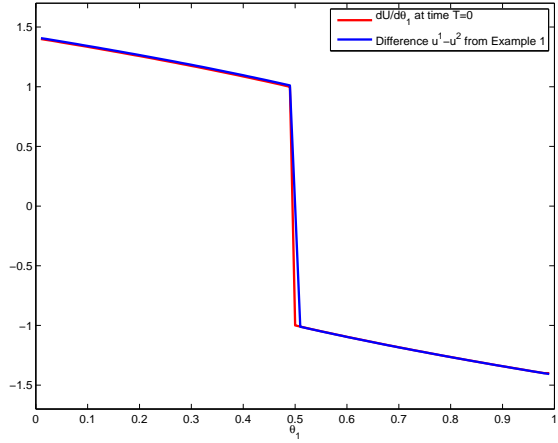}
\caption{Derivative of $U$ with respect to $\theta_1$ versus the difference $u^1-u^2$ calculated in Example~I.}\label{f:hj}
\end{center}
\end{figure}

\section{Shock structure for two-state problems}
\label{sstsp}

Finally we consider two-state problems and investigate in detail the shock structure. For this purpose we perform a reduction of the dimension (since the key issues are 
related to the difference of the values, more than to its proper value) and obtain a hyperbolic scalar equation. Then we introduce a related conservation law,
which yields a Rankine-Hugoniot condition for possible shocks. This new formulation allows us to study finite state mean field games from another numerical perspective and
 gives new insights into the shock structure and the qualitative behaviour of solutions to \eqref{hsys}. 

\subsection{Reduction to a scalar problem}
Let $U(\theta,t)$ be a $C^1$ solution to \eqref{hsys} with $d=2$. 
We define $w(\zeta, t)=U^1(\theta, t)~-~U^2(\theta, t),$ where $\theta=(\zeta, 1-\zeta)$.
From \eqref{hsys} we have that
\begin{align}
\begin{split}
\left( U^1 - U^2 \right)_t  &= -g_1(U^1,U^2,\theta_1,\theta_2)  \partial_{\theta_1} \left( U^1 - U^2 \right) 
 -g_2(U^1,U^2,\theta_1,\theta_2) \partial_{\theta_2} \left( U^1 - U^2 \right){}\\
&\phantom{=}- h(U^1,U^2,\theta_1,\theta_2,1) + h(U^1,U^2,\theta_1,\theta_2,2).\label{diffU}
\end{split}
\end{align}
Then $h$ and $g$, given by \eqref{hami} and \eqref{propg}, can be written as
$ h(U^1,U^2,\theta_1,\theta_2,i) = h(w(\zeta,t),0,\zeta,1-\zeta,i)$ 
and $g_1(U^1,U^2,\theta_1,\theta_2)= g_1(w(\zeta,t),0,\zeta,1-\zeta)$. 
For two-state problems equation \eqref{symmetrygj} gives $g_2 = - g_1$. Hence we obtain
\begin{align*}
w_t  = & - g_1(w(\zeta,t),0,\zeta,1-\zeta) \left[ \ \partial_{\theta_1} \left( U^1 - U^2 \right) - \partial_{\theta_2} \left( U^1 - U^2 \right) \right] \\
 	   &- h(w(\zeta,t),0,\zeta,1-\zeta,1) + h(w(\zeta,t),0,\zeta,1-\zeta,2).
\end{align*}
Define $r$ and $q$ by
\begin{align*}
r(w(\zeta,t),\zeta) &= - g_1(w(\zeta,t),0,\zeta,1-\zeta),\\
q(w(\zeta,t),\zeta) &= h(w(\zeta,t),0,\zeta,1-\zeta,1) - h(w(\zeta,t),0,\zeta,1-\zeta,2),
\end{align*}
and denote $\frac{\partial w}{\partial \zeta}$ by $ \frac{\partial w(\zeta,t)}{\partial \zeta} =
 \left( \frac{\partial}{\partial \theta_1} - \frac{\partial}{\partial \theta_2} \right)
 \left( U^1 - U^2  \right) |_{(\zeta,1-\zeta)}$.
Then equation \eqref{diffU} for the difference of $U^i$ can be written as
\begin{equation}
\label{scalar}
- w_t(\zeta,t) + r(w(\zeta,t),\zeta) \ \partial_{\zeta} w(\zeta,t) = q(w(\zeta,t),\zeta).
\end{equation}

\subsection{A conservation law and the Rankine-Hugoniot condition}

Consider the following conservation law associated to \eqref{scalar} on the interval~$[0,1]$,
\begin{equation}
\label{eqP}
- P_t(\zeta,t)  + \partial_{\zeta} \left( r(w(\zeta,t),\zeta) \ P(\zeta,t) \right) = 0, 
\end{equation}
supplemented with the boundary condition $P(0, t)=P(1,t)=0$ for all times $t \in [0,T]$.\\
\noindent If $P$ is a sufficiently smooth solution to \eqref{eqP} and $P(\zeta, 0)\geq 0$, then the maximum principle implies that $P(\zeta, t)\geq 0$. 
Furthermore, if $ \int P(\zeta, 0)d\zeta=1$, we have that $ \int P(\zeta, t) d\zeta=1$. Assuming
that $P(\zeta, 0)$ is a probability distribution, we can regard $P(\zeta,t)$ as a probability distribution on the
set $\Pp(\Ii)$, $\Ii=\{1,2\}$, as we have a natural identification for two-state problems: $\Pp(\Ii)\simeq [0,1]$. 
Hence equation \eqref{eqP} describes an evolution of a probability distribution on $\Pp(\Ii)$.
Uncertainty in the initial distribution of the mean field $\zeta$ can be encoded in the initial condition $P(\zeta,0)$ and propagated through \eqref{eqP}.

\noindent Since equation \eqref{eqP} may not have globally smooth solutions, we use the Rankine-Hugoniot condition to characterise certain possibly discontinuous
solutions. Let $s:[0,T]\to [0,1]$ be a $C^1$ curve and suppose that $P$ is a $C^1$ function on both 
$0<\zeta< s(t)$ and $s(t)<\zeta<1$, for $t\in [0,T]$. 
Assume further that \eqref{eqP} holds in this set. Let $B:[0,1]\times [0,T]\to \Rr$. 
We denote by $[B]$ the jump of $B$ across the curve $s(t)$, that is,
$
[B]=B(s^+(t),t)-B(s^-(t), t) 
$.
Equation \eqref{eqP} leads to the Rankine-Hugoniot condition of the form
\begin{equation} 
\label{RHcond}
[P] \dot s = - [ r \ P ]. 
\end{equation}
If we start with initial condition $P(\zeta, 0)=1$, then the support of $P$ is the closure of the set of all mean field states which can be reached from some initial mean field state (all possible choices of $\zeta$ at time $0$).
Suppose $\mathcal{B}=\{(\zeta, t)=(s(t), t),\ 0~\leq~t~\leq~T\}$. Suppose that there is a discontinuity in $P$ at the boundary $\mathcal{B}$ of the set $P=0$.  Then 
we conclude from the Rankine-Hugoniot condition that
\[
\dot s=-r. 
\]
Hence $\mathcal{B}$ is a characteristic for \eqref{scalar}.

\noindent
Using \eqref{eqP} we can also derive local Lipschitz bounds for the solution to \eqref{scalar}:
\begin{proposition}
Let $w$ be a smooth solution to \eqref{scalar}. Then there exists a time $t_0< T$ and a constant $C$,
depending only the $L^\infty$ norm and the Lipschitz constant of $w(x, T)$, such that 
for $t_0\leq t\leq T$, 
$$ \left\|\partial_\zeta w(\cdot, t) \right\|_{L^\infty([0,1])}\leq \frac C {(t-t_0)}.
$$  
\end{proposition}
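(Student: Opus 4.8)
The plan is to reduce the estimate to a Riccati-type ordinary differential inequality along characteristics. Differentiating \eqref{scalar} with respect to $\zeta$ and setting $v=\partial_\zeta w$, one obtains an equation with the same transport part $-\partial_t+r\,\partial_\zeta$ as \eqref{scalar}, namely
\[
-\,v_t + r(w,\zeta)\,\partial_\zeta v = -\,\partial_w r(w,\zeta)\,v^2 + \bigl(\partial_w q(w,\zeta)-\partial_\zeta r(w,\zeta)\bigr)\,v + \partial_\zeta q(w,\zeta).
\]
Passing to the reversed time $\tau = T-t$ and to the characteristics $\dot\zeta = r(w,\zeta)$, along which $\dot w = q(w,\zeta)$, the function $v$ solves the scalar Riccati equation $\dot v = -\,\partial_w r\,v^2 + b\,v + c$, where $b,c$ are the bounded coefficients above evaluated along the characteristic.

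Two ingredients make this useful. First, since $\dot w = q(w,\zeta)$ along characteristics and $q$ is locally Lipschitz, hence at most linearly growing in $w$, a Gronwall argument bounds $\|w(\cdot,t)\|_{L^\infty}$ in terms of $\|w(\cdot,T)\|_{L^\infty}$ and $T$; on the resulting compact range of $w$ the functions $r,q$ and their first derivatives are bounded by some $M=M(\|w(\cdot,T)\|_{L^\infty})$. Second, and decisively, the quadratic coefficient $-\partial_w r$ has the right sign: by \eqref{propg}--\eqref{hami}, $r=-g_1$ is, for fixed $\zeta$, a $\zeta$-weighted combination of first derivatives of the maps $z\mapsto h(z,\theta,i)$, and each such map is concave (an infimum of affine functions); hence $\partial_w r\ge 0$, and under the strong convexity assumption (A1) in fact $\partial_w r\ge\delta>0$ as long as $\zeta$ stays in a compact subinterval of $(0,1)$.

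The bound then follows from an Oleinik-type comparison. On the one hand, when $v\ge0$ one has $\dot v\le bv+c\le 2Mv+M$, so $v\le C_1:=(L+1)e^{2MT}$, with $L$ the Lipschitz constant of $w(\cdot,T)$. On the other hand, for $\sigma:=-v$ one has $\dot\sigma = \partial_w r\,\sigma^2 + (\partial_w q-\partial_\zeta r)\sigma - \partial_\zeta q \ge \delta\sigma^2 - 2M\sigma - M \ge \tfrac\delta2\sigma^2$ as soon as $\sigma\ge\sigma_*(\delta,M)$; comparing with the solution of $y'=\tfrac\delta2 y^2$ shows that if $\sigma$ attains a value $Y\ge\sigma_*$ at some time, the solution $w$ ceases to be $C^1$ within a further time $2/(\delta Y)$. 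Therefore, letting $t_0$ be the first time (going backwards from $T$) at which smoothness is lost, one gets $-\partial_\zeta w(\cdot,t)\le 2/(\delta(t-t_0))$ for $t_0<t\le T$; combining with $\partial_\zeta w\le C_1\le C_1T/(t-t_0)$ yields $\|\partial_\zeta w(\cdot,t)\|_{L^\infty}\le C/(t-t_0)$ with $C=C(L,\|w(\cdot,T)\|_{L^\infty},T)$. Where $\partial_w r$ degenerates the quadratic term is harmless and $\sigma$ can only grow linearly, so it stays below $C_1$ and the same conclusion holds.

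The main obstacle I expect is precisely the degeneration at the endpoints $\zeta\in\{0,1\}$: there the characteristic flow is tangent to the interval, $\partial_w r$ vanishes, and — as the paper notes for the associated state-constrained Hamilton--Jacobi problem — the characteristics need not cover the domain; making the comparison uniform up to the boundary (or settling for the statement on compact subintervals of $(0,1)$) is the delicate point. This is also where \eqref{eqP} is the natural bookkeeping device: the Jacobian $J$ of the characteristic flow satisfies $\dot J=(\partial_w r\,v+\partial_\zeta r)J$ and, by \eqref{eqP}, $P\,J$ is constant along characteristics, so an $L^\infty$-bound on $\partial_\zeta w$ is equivalent to a positive lower bound on $J$, i.e.\ to an upper bound on $P$; the maximum principle for \eqref{eqP} together with the Rankine--Hugoniot relation \eqref{RHcond} describe exactly how the mass of $P$ concentrates as a shock forms, and one may carry out the whole argument on $P$ instead. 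A minor technical caveat: $r$ and $q$ are only Lipschitz in $w$ (corners at $w=0$), so the Riccati equation for $v$ should be read in the Carathéodory sense, or the comparison performed separately in the regions $\{w>0\}$ and $\{w<0\}$.
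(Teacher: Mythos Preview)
Your characteristic/Riccati approach is sound and does yield the stated bound, but it is genuinely different from the paper's argument. The paper does not work on characteristics directly; instead it sets $S=(\partial_\zeta w)^2$, multiplies the differentiated equation by a solution $P$ of the adjoint conservation law \eqref{eqP}, and integrates. The transport terms cancel by duality, the right-hand side is bounded crudely by $C_1\int S^{3/2}P+C_2$ using only boundedness of $w$ (no sign information on $\partial_w r$ is invoked), and since $P(\cdot,t)$ can be taken to be an arbitrary probability measure one recovers $\|S(\cdot,t)\|_\infty\le C+\int_t^T\|S(\cdot,s)\|_\infty^{3/2}\,ds+\|S(\cdot,T)\|_\infty$; a nonlinear Gronwall then gives $\|S\|_\infty\le C/(t-t_0)^2$ with a $t_0$ determined explicitly by the terminal data. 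What your route buys is the sharper one-sided (Oleinik) estimate, at the price of needing the concavity structure $\partial_w r\ge 0$; what the paper's route buys is robustness (it works regardless of the sign of $\partial_w r$) and an automatic treatment of the endpoints, since $P$ vanishes at $\zeta\in\{0,1\}$ and the integration by parts produces no boundary terms.

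One point in your write-up deserves tightening. You define $t_0$ as the actual blow-up time of the smooth solution and derive $-\partial_\zeta w\le 2/(\delta(t-t_0))$ by contraposition from $\dot\sigma\ge\tfrac{\delta}{2}\sigma^2$. But the proposition asks for a $t_0$ depending only on $\|w(\cdot,T)\|_{L^\infty}$ and its Lipschitz constant, and the blow-up time is not a priori such a quantity. The clean fix is to use the \emph{upper} Riccati inequality $\dot\sigma\le M\sigma^2+2M\sigma+M$ (valid since $0\le\partial_w r\le M$) and compare with its explicit solution started from $\sigma(0)\le L$: this gives $\sigma(\tau)\le C/(\tau^*-\tau)$ with $\tau^*=\tau^*(M,L)$, hence $t_0=T-\tau^*$ depends only on the data. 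This version needs only boundedness of $\partial_w r$, not a uniform lower bound $\delta>0$, so the endpoint degeneration you flag becomes a non-issue and the argument goes through on all of $[0,1]$.
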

\begin{proof}
Let $P$ be a solution to \eqref{eqP} with $P(0, t)=P(1,t)=0$, $t\leq T$, $P(\zeta, t)\geq 0$,
with $\int P(\zeta, t)d\zeta=1$. We differentiate equation \eqref{scalar} with respect to $\zeta$ and multiply it by $2 \partial_\zeta w$ and
obtain
\[
-\partial_t S+r\partial_\zeta S=-2\partial_\zeta r \partial_\zeta w-
2\partial_w r S \partial_\zeta w+2\partial_w q S+2\partial_\zeta q\partial_\zeta w,
\] 
using that $S(\zeta,t)=(\partial_\zeta w(\zeta,t))^2$. Then we can deduce the following estimate:
\begin{align*}
-\frac{d}{dt}\int S(\zeta,t) P(\zeta, t) \ d\zeta &\leq 2\int \left( -\partial_\zeta r \partial_\zeta w-
\partial_w r S \partial_\zeta w+\partial_w q S+\partial_\zeta q\partial_\zeta w \right) P (\zeta, t) \ d\zeta \\
& \leq C_1\int S^{3/2} P(\zeta, t)d\zeta+C_2, 
\end{align*}
where  we use the fact that $w$ is bounded, see \cite{GMS2}, in the last step. Then 
\[
\int S(\zeta,t) \ P(\zeta, t) \ d\zeta \leq C+\int_t^T \|S(\cdot, s)\|_\infty^{3/2}ds+\|S(\cdot, T)\|_{\infty}.
\]
Taking $P(\zeta,t)$ to be an arbitrary probability measure on $[0,1]$ this yields that 
\[
\|S(\cdot,t)\|_\infty \leq  C+\int_t^T \|S(\cdot, s)\|_\infty^{3/2}ds+\|S(\cdot, T)\|_{\infty}.
\]
This estimate does not give global bounds, but a nonlinear version of Gronwall's inequality yields the existence of $t_0$. 
\end{proof}

\subsection{Numerical analysis of the shock structure}

Finally we discuss the particular formulation of equations \eqref{scalar} and \eqref{eqP} as well as the numerical realisation of example~I presented in section \ref{s:numericalexamples}. Equation \eqref{scalar} can be written as 
 \begin{equation}\label{scalar:applied3}
 - w_t(\zeta,t) = \frac{(1-2\zeta)|w| - w}{2} \ \partial_\zeta w(\zeta,t) + \frac{1}{2} \ |w| \ w   - \left[ f(1,\zeta,1-\zeta) - f(2,\zeta,1-\zeta) \right].
 \end{equation}
using that $h$ and $g$ are given by \eqref{e:h} and \eqref{formg1} respectively.
Similarly, \eqref{eqP} takes the form
\begin{equation}
\label{eqP2}
- P_t(\zeta,t) + \partial_\zeta \left[ -\frac{(1-2\zeta) \ |w| - w}{2} P(\zeta,t) \right] = 0. 
\end{equation}
Note that the difference $w = u^1-u^2$ has to satisfy $w(0,t)\geq 0$ and $w(1,t)\leq 0$. The violation of this assertion corresponds to the
case of no agents being in state $1$, 
i.e. $\zeta=0$ in a situation where state $1$ would be preferred to state $2$
(and analogously for $\zeta=1$).
Therefore equation  \eqref{eqP2} does not require any boundary conditions since for $\zeta=0$ and $\zeta=1$ the advection 
 term vanishes, i.e.,
 $
 \frac{(1-2\zeta) \ |w| - w}{2}=0.
 $

\noindent We simulate \eqref{eqP2} using a semidiscrete central upwind scheme introduced by Kurganov et al., see \cite{Kurganov2001}, and
use the same parameters as in Section \ref{s:numericalexamples}. The initial datum $P(\zeta,0)$ is set to
\begin{align*}
P(\zeta,0) = 1 \text{ for } \zeta \in (0,1) \text{ and } P(\zeta,0) = 0 \text{ for } \zeta \in \lbrace 0,1 \rbrace.
\end{align*}
Due to the vanishing advection terms at $\zeta = 0,1$, we set homogeneous Dirichlet boundary conditions for $P$, 
that is $P(\zeta, t)=0$ if $\zeta \in \lbrace 0,1 \rbrace$. We choose an
equidistant discretization of $N=125$ points and times steps $\Delta t=10^{-4}$. The evolution of $P$ for example~I presented in section \ref{ses}  is illustrated in figure \ref{f:p}.
\begin{figure}[hftb!]
\begin{center}
\includegraphics[width=0.5\textwidth]{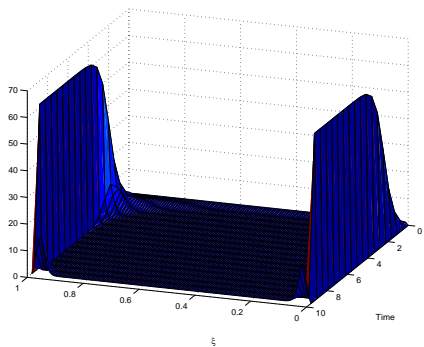} 
\caption{Evolution of $P$ for Example I discussed in Section \ref{ses}. }\label{f:p}
\end{center}
\end{figure}
We observe that the function $P$ does not see the shock in $w$ (see Figure~1), which is located at $\zeta=0.5$. 

\section{Conclusions}

In this paper we present effective numerical methods for two-state mean field games
and discuss a number of illustrative examples in socio-economic sciences. We compare our method with
numerical results obtained from classical and well established schemes 
for Hamilton-Jacobi equations. As presented in the examples, our method  captures shocks effectively. 
We analyse the shock structure using an associated conservation law and prove a local Lipschitz estimate 
for the solutions of \eqref{hsys}. 

\noindent Several challenging and interesting mathematical questions will be addressed in a future
paper, like the development of numerical schemes based on the dual variable method and the Lions transversal variable method,
see \cite{LCDF}.

\bibliographystyle{alpha}
\bibliography{mfg_urls}

\section*{Acknowledgements}
DG was partially supported by CAMGSD-LARSys (FCT-Portugal), PTDC/MAT-CAL/0749/2012 (FCT-Portugal), and KAUST SRI, Uncertainty Quantification Center in Computational Science and Engineering. RMV was partially supported by CNPq - Brazil through a PhD scholarship - Program Science without Borders. MTW acknowledges support from the Austrian Academy of Sciences \"OAW via the New Frontiers Project NST-001.

\end{document}